\newtheorem{theorem}{Theorem}[section]
\newtheorem{thm}[theorem]{Theorem}
\newtheorem{lemma}[theorem]{Lemma}
\let\c@equation\c@theorem
\numberwithin{equation}{section}
\newcommand{\cA}{\mathcal{A}} \newcommand{\cB}{\mathcal{B}} \newcommand{\cC}{\mathcal{C}}      \newcommand{\cI}{\mathcal{I}}       
 \newcommand{\cR}{\mathcal{R}}        
\renewcommand{\dfrac}[2]{\lower0.15ex\hbox{\large$\frac{#1}{#2}$}}
\newcommand{\V}{V\mkern-3mu}
\newcommand{\card}[1]{\mathopen| #1\mathclose|}
\title{\Large $R(5,5) \leq 48$}
\author{Vigleik Angeltveit${}^1$ and Brendan D. McKay${}^2$}
\address{${}^1$Mathematical Sciences Institute; 
${}^2$Research School of Computer Science\\
Australian National University, Canberra, ACT 2601, Australia}
\email{vigleik.angeltveit@anu.edu.au, brendan.mckay@anu.edu.au}
\begin{document}
 
\begin{abstract}
We improve the upper bound on the Ramsey number $R(5,5)$ from $R(5,5) \leq 49$ to $R(5,5) \leq 48$.
We also complete the catalogue of extremal graphs for $R(4,5)$.
\end{abstract}

\maketitle

\section{Introduction}
The Ramsey number $R(s,t)$ is defined to be the smallest $n$ such that every graph of order $n$ contains either a clique of $s$ vertices or an independent
set of $t$ vertices.

\begin{thm} \label{t:main}
The Ramsey number $R(5,5)$ is less than or equal to $48$.
\end{thm}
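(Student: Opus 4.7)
My plan is to argue by contradiction. Assume there exists a graph $G$ on $48$ vertices containing neither $K_5$ nor an independent set of size $5$---a so-called $(5,5,48)$-graph. The overall strategy is to combine the classical neighbourhood decomposition with the completed catalogue of extremal $(4,5)$-graphs (promised in the abstract) to rule out every possible local structure.

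The first step is the standard degree analysis. Fix a vertex $v\in V(G)$. The induced subgraph $G[N(v)]$ contains no $K_4$ (else $v$ would extend it to $K_5$) and no independent $5$-set, so it is a $(4,5)$-graph; dually, $G[M(v)]$ with $M(v):=V(G)\setminus(\{v\}\cup N(v))$ is a $(5,4)$-graph. Since $R(4,5)=R(5,4)=25$, both have at most $24$ vertices, forcing $d(v)\in\{23,24\}$ for every $v$. The same bounds apply in the complement $\overline{G}$, so $G$ has degrees in $\{23,24\}$ and codegrees likewise. A handshake argument then constrains the parity of the number of vertices of each degree and yields useful global edge counts.

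With neighbourhoods so tightly restricted, the remaining work is a computer-assisted assembly argument. I would begin with the case where some vertex $v$ has degree $24$, iterating over the relatively short catalogue of $(4,5,24)$-graphs as candidates for $G[N(v)]$; for each candidate I attempt to extend to a full $G$ by choosing the bipartite edges between $N(v)$ and $M(v)$ (with $G[M(v)]$ simultaneously ranging over the $(5,4,23)$ catalogue), always enforcing that no $K_5$ or independent $5$-set arises across the interface. Strong pruning should come from common-neighbour counts, triangle and anti-triangle counts across the bipartition, and the requirement that every other vertex's neighbourhood also lie in the catalogue; branch symmetry can be cut by the automorphism groups of the fixed catalogued graphs. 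If no vertex has degree $24$, then $G$ is $23$-regular and an analogous but more extensive run is made using the catalogue of $(4,5,23)$-graphs on both sides of the decomposition. The main obstacle I foresee is computational feasibility: the catalogue of $(4,5,23)$-graphs is far larger than that of $(4,5,24)$-graphs, so without aggressive invariant-based pruning and symmetry reduction the assembly search will not terminate in reasonable time. A successful refutation of every branch yields the desired contradiction and establishes $R(5,5)\leq 48$.
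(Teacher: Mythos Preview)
Your degree analysis is correct, but the decomposition you propose is the wrong one, and the gap is not merely ``more computation'': the approach as stated is infeasible and misses the paper's central idea.

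You split $G$ around a single vertex $v$ into $N(v)$ and $M(v)$, planning to iterate over $\cR(4,5,24)$ for the former and $\cR(5,4,23)$ for the latter, then fill in the $24\times 23=552$ bipartite unknowns. The fatal problem is that the $\cR(4,5,23)$ catalogue is not available and is orders of magnitude larger than $\cR(4,5,24)$; completing it is itself a major open computation, and without it your search cannot even begin. (Incidentally, your fallback ``$23$-regular'' case is unnecessary: if every vertex has degree $23$, pass to the complement, which is $24$-regular and still in $\cR(5,5,48)$.)

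The paper avoids this entirely by choosing \emph{two adjacent} vertices $a,b$ of degree $24$ and looking at their two neighbourhoods $G=F[N(b)]$ and $H=F[N(a)]$, both in the small catalogue $\cR(4,5,24)$. These overlap in $K=G\cap H\in\cR(3,5,d)$, and the unknown edges lie only between $\V G\setminus(\V K\cup\{a\})$ and $\V H\setminus(\V K\cup\{b\})$, a $(23-d)\times(23-d)$ block. A short counting lemma (the paper's Lemma~\ref{l:Step123ok}) shows one can always arrange $d\le 11$, so at most $12^2=144$ unknowns remain and the $\cR(4,5,23)$ catalogue is never needed. That two-vertex overlap trick, together with the reduction to $d\le 11$, is precisely the missing idea that turns an intractable search into a six-core-month computation.
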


The history of $R(5,5)$ is provided in~\cite{McRa97}.  The lower bound
of 43 established constructively by Exoo~\cite{Ex89} is still the best.
The previous best upper bound of 49 was proved by McKay and Radziszowski~\cite{McRa97}.
By Theorem~\ref{t:main} we now have $43 \leq R(5,5) \leq 48$.

The actual value of $R(5,5)$ is widely believed to be $43$, because a lot of computer resources have been expended in an unsuccessful attempt to construct a Ramsey(5,5)-graph of order 43~\cite{McRa97}.
As additional evidence, we can report that, in unpublished 2014 work,
Lieby and the second author proved
that any Ramsey(5,5) graph on 42 vertices other than the 656 reported
in~\cite{McRa97} do not share a 37-vertex subgraph with any of the~656.

The proof of Theorem \ref{t:main} is via computer verification, checking approximately two trillion separate cases.
We wrote two independent programs to carry out the calculation, to minimise the chance of any computer bugs affecting our results.

\section{Outline of the proof of Theorem \ref{t:main}}
Let $\cR(s,t,n)$ denote the set of isomorphism classes of graphs of order $n$ without an $s$-clique or independent $t$-set,
and $\cR(s,t)=\bigcup_n \cR(s,t,n)$.
The main idea is that given a graph $F \in \cR(5,5,48)$, a large subgraph of it must be obtained by gluing together two graphs in $\cR(4,5,24)$ along a graph in $\cR(3,5,d)$ for some $d$.

A list of 350,904 graphs in $\cR(4,5,24)$ was compiled by McKay and Radziszowski \cite{McRa95} in 1995, and our first task was to complete their list. This was actually the most time-consuming part of the project.

\begin{thm} \label{t:R45}
$\card{\cR(4,5,24)} = 352{,}366$.
\end{thm}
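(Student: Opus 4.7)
The plan is to construct $\cR(4,5,24)$ explicitly by an exhaustive computer enumeration, taking advantage of the known catalogues of smaller Ramsey-avoiding families. The key structural observation is this: for any $G \in \cR(4,5,24)$ and any vertex $v \in V(G)$ of degree $d$, the induced subgraph $G[N(v)]$ lies in $\cR(3,5,d)$ (a triangle in it would combine with $v$ to give a $K_4$, and an independent 5-set would already be one in $G$), while $G[V(G) \setminus (N(v) \cup \{v\})]$ lies in $\cR(4,4,23-d)$ (an independent 4-set there would combine with $v$ to give an independent 5-set). Since $R(3,5)=14$ and $R(4,4)=18$, these constraints pin the degree to the range $6 \leq d \leq 13$, so only eight values of $d$ need to be considered.

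The algorithm is then a vertex gluing. Iterate over $d \in \{6,\dots,13\}$ and over all pairs $(A,B) \in \cR(3,5,d) \times \cR(4,4,23-d)$, drawn from the existing enumerations of these families. For each pair, enumerate the bipartite graphs $H$ between $V(A)$ and $V(B)$ such that the graph obtained by taking $A \sqcup B$, adding the edges of $H$, and adjoining a new vertex $v$ joined to every vertex of $A$ belongs to $\cR(4,5,24)$. To avoid a combinatorial blow-up from isomorphic copies, fix $v$ canonically (for example, choose $v$ to be a vertex of minimum degree, further constrained by a canonical tiebreaker), exploit the automorphism groups of $A$ and $B$ to break symmetry among bipartite gluings, and canonically label every surviving candidate (via \texttt{nauty} or equivalent) to store it in a hash table keyed by its canonical form. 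The final count is the size of this table.

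The principal obstacle is simply the combinatorial scale. The number of candidate gluings is astronomical before pruning, which is why the authors describe this as the most time-consuming part of the project. Taming the search requires backtracking with aggressive pruning: abort a partial bipartite edge assignment the moment it forces a $K_4$ (via $v$ together with one vertex of $A$ and two adjacent vertices of $B$ joined to it, or via three vertices of $A$ sharing a common neighbour in $B$) or an independent 5-set, and prune whole branches using vertex-orbit arguments inside $\textrm{Aut}(A) \times \textrm{Aut}(B)$. Because the conclusion is a pure computational count that revises a previously published list, the only real safeguard against software error is independent replication: two separate implementations are written and run to completion, and their outputs are required to coincide both in cardinality and, graph by graph, in canonical form; agreement with the 350{,}904 graphs of \cite{McRa95} as a subset provides an additional consistency check.
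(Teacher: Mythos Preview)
Your approach is essentially the paper's: decompose any $G\in\cR(4,5,24)$ around a chosen vertex $v$ into $G[N(v)]\in\cR(3,5,d)$ and $G[V(G)\setminus(N(v)\cup\{v\})]\in\cR(4,4,23-d)$, then enumerate all bipartite gluings between the two parts and filter/canonicalise. The only operational differences are that the paper leverages the partial catalogue of \cite{McRa95} (which already certifies the cases of minimum degree $6,7,8$, maximum degree $12,13$, and $11$-regular) so that only the gluings for $d\in\{9,10\}$ need to be run anew, and it verifies the result via targeted sanity checks (reconstruction from the neighbourhood/co-neighbourhood sets of the new graphs, and a 21-vertex--subgraph separation test) rather than a second full implementation.

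One small slip worth flagging: your description of the $K_4$ pruning is garbled. Since $v$ is adjacent only to $A$, no $K_4$ can contain $v$ together with any vertex of $B$; and three vertices of $A$ cannot share a triangle because $A\in\cR(3,5,d)$ is triangle-free. The genuine $K_4$ obstructions in the bipartite gluing are an edge of $A$ plus an edge of $B$ with all four cross-edges present, or a triangle of $B$ with a common neighbour in $A$. This does not affect the correctness of your enumeration, only the accuracy of the pruning narrative.
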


\begin{figure}[ht]
\centering
\includegraphics[scale=0.7]{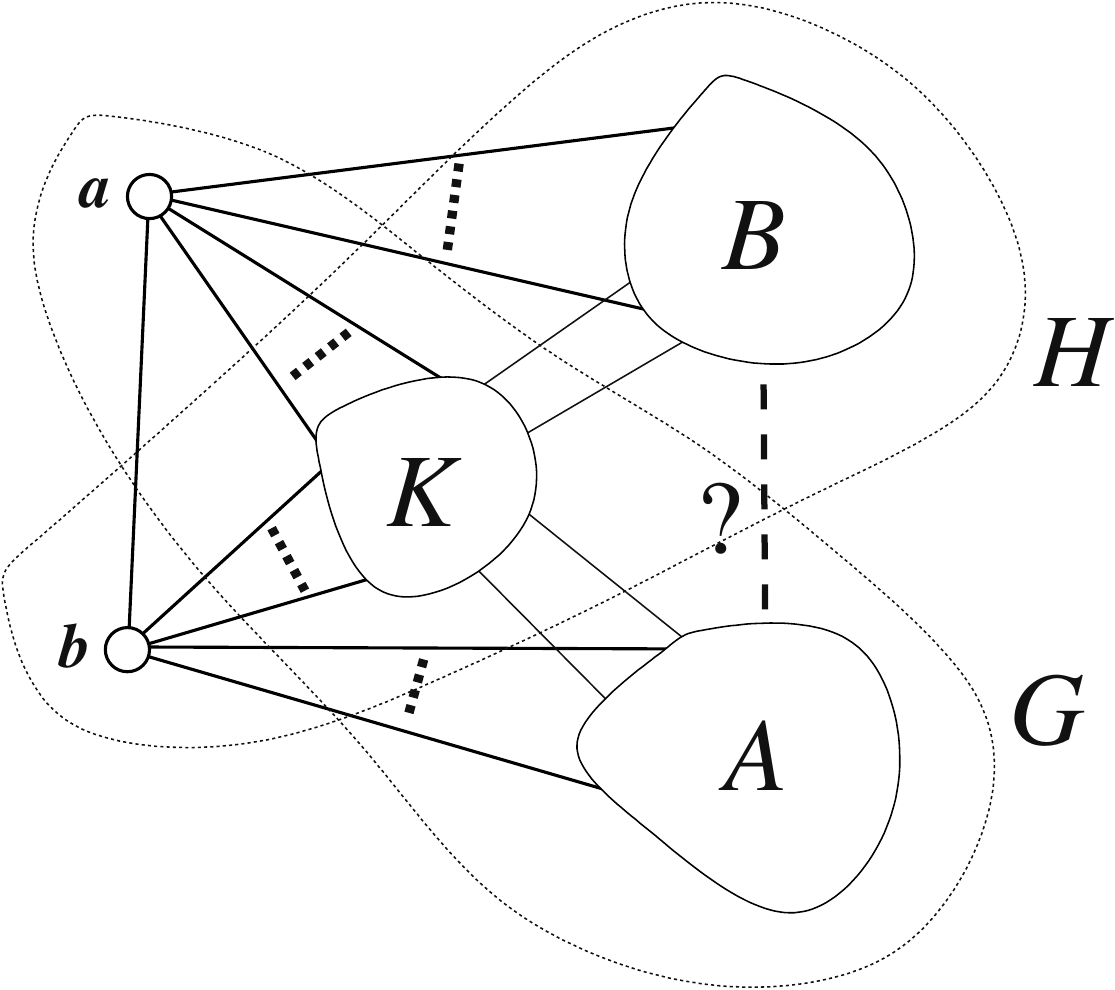}
\caption{$G$ and $H$ are given graphs from $\cR(4,5,24)$ that
  overlap in a graph $K$.  The problem is to choose the edges
  between $A$ and $B$ so that the whole is in $\cR(5,5)$.\label{r55pic}}
\end{figure}

We now explain the main proof idea in more detail.
For a graph $F$, $\V F$ is the vertex-set of $F$, $N_F(w)$ is the 
neighbourhood of vertex $w\in \V F$ and $F[W]$ is the subgraph of $F$ induced
by~$W\subseteq \V F$.
First, note that because $R(4,5)=25$ \cite{McRa95}, every vertex in a graph $F \in
\cR(5,5,48)$ must have degree $23$ or $24$. By replacing $F$ by its complement if
necessary we can assume that $F$ has at least $24$ vertices of degree $24$.
Hence $F$ must have two adjacent vertices $a,b$ of degree $24$. Define
\begin{align*}
 G &= F[N_F(b)], \\
 H &= F[N_F(a)], \\
 K &= F[\V G \cap \V H].
\end{align*}
In words, $G$ is the subgraph of $F$ induced by the $24$ vertices adjacent to~$b$
(this includes~$a$ but not~$b$), $H$ is the subgraph induced by the vertices
adjacent to~$a$, and $K$ is the intersection of $G$ and~$H$.
Please see Figure~\ref{r55pic}.
Note that $G, H \in \cR(4,5,24)$ and that $K \in \cR(3,5,d)$ for some $d$. Because
$R(3,5) = 14$ we must have $d \leq 13$, and $d$ is also equal to the degree of $a$ in $G$ and the degree of $b$ in $H$.

To reconstruct $F[\V G \cup \V H]$, which is a graph with $48-d$ vertices, from $G$, $H$ and $K$, it suffices to specify how $K$ is a subgraph of $G$ and $H$, and whether or not we have an edge between $x$ and $y$ for $x \in \V G-\V K-\{a\}$ and $y \in \V H-\V K-\{b\}$; i.e. between parts labelled $A$ and $B$ in Figure \ref{r55pic}.
We call this procedure \textit{gluing}.
For each inclusion of $K$ into $G$ and $H$ there are $2^{(23-d)^2}$ ways of gluing $G$ and $H$ along $K$, but we will only consider gluings that could give a graph in $\cR(5,5,48-d)$.

For $K \in \cR(3,5,d)$, define
\[
 \cR(4,5,24,K) = \{ (G,a) \mid G \in \cR(4,5,24), a \in \V G, G[N_G(a)] \cong K\}.
\]
We will call $(G,a)$ a \textit{pointed graph of type $K$}. Our proof of Theorem \ref{t:main} consists of the following steps.
\newline

\noindent
\textbf{Step 1:}
We completed the list of graphs in $\cR(4,5,24)$ compiled by McKay and Radziszowski,  thereby proving Theorem \ref{t:R45}. This was done by a straightforward (but computationally expensive) extension of the method in~\cite{McRa95}. While that calculation would have taken too long in 1995,
it was doable in 2016.
\newline

\noindent
\textbf{Step 2:}
For each $K \in \cR(3,5,d)$ with $d \leq 11$ and for each pair
$(G,a), (H,b)\in\cR(4,5,24,K)$, we used a computer program to calculate all ways of gluing $G$ and $H$ along $K$. Note that this consisted of one gluing problem for each automorphism of $K$.
\newline

\noindent
\textbf{Step 3:}
For each graph generated in Step 2, we used another program which attempts
in all possible ways to add one vertex while staying within $\cR(5,5)$.
Since this was never possible, none of the graphs generated in Step~2 are subgraphs of a graph in $\cR(5,5,48)$.
%the none of the graph generated in Step~2
%is not subgraph of a graph in $\cR(5,5,48)$).

\begin{lemma}\label{l:Step123ok}
 Execution of Steps 1--3 is sufficient to prove Theorem~\ref{t:main}.
\end{lemma}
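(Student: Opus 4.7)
The plan is to suppose a graph $F\in\cR(5,5,48)$ exists and derive a contradiction from the failure in Step~3. Since $R(4,5)=25$, each vertex of $F$ has degree $23$ or $24$; by the symmetry of $R(5,5)$, replacing $F$ with its complement if necessary, we may assume at least $24$ vertices have degree~$24$, and denote this set by $S$. Because $|S|\ge 24>4$ and $F$ contains no independent $5$-set, the induced subgraph $F[S]$ has at least one edge; pick adjacent $a,b\in S$ and form $G,H\in\cR(4,5,24)$ and $K\in\cR(3,5,d)$ as in the discussion preceding the lemma, with $d\le 13$.

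The crux of the argument is to show that the pair $(a,b)$ can be chosen so that $d\le 11$, since only then does the enumeration in Step~2 actually cover the situation. The lower bound $d\ge 6$ is a quick Ramsey argument: the $23-d$ non-neighbours of $a$ in $G$ induce a graph containing no $K_4$ (inherited from $G\in\cR(4,5)$) and no independent $4$-set (lest, together with $a$, it give an independent $5$-set in $G$), so this subgraph lies in $\cR(4,4)$ and has at most $R(4,4)-1=17$ vertices, forcing $d\ge 6$. The real work is to exclude $d\in\{12,13\}$. I would attempt this in one of two ways: either (i) verify from the catalogue produced in Step~1 that $\cR(4,5,24,K)$ is empty for every $K\in\cR(3,5,12)\cup\cR(3,5,13)$, so that no candidate pair has $d$ in this range at all; or (ii) assuming that every adjacent deg-$24$ pair had codegree $\ge 12$, deduce a contradiction by an averaging bound on $|E(F[N_F(v)])|$ (which would force large minimum degree in $F[N_F(v)]\in\cR(4,5,24)$ along $N_F(v)\cap S$) against the edge-counts realised in the Step~1 catalogue. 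I expect this is the main obstacle in the whole proof.

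Given $d\le 11$, the argument closes cleanly. The induced subgraph $F[\V G\cup \V H]$ has order $48-d$ and is obtained from $(G,a)\in\cR(4,5,24,K)$ and $(H,b)\in\cR(4,5,24,K)$ by gluing along some identification of $K$; by Step~1 both $G$ and $H$ lie in the catalogue of $\cR(4,5,24)$, and by Step~2, which enumerates one gluing per automorphism of $K$ and therefore exhausts all labellings of the two inclusions, the glued graph appears (up to isomorphism) among the Step~2 outputs. Because $d\ge 6>0$, the set $\V F\setminus(\V G\cup\V H)$ is non-empty, and adjoining any vertex from it produces a graph in $\cR(5,5,49-d)$ sitting inside $F$; this is a one-vertex extension of the Step~2 graph that remains in $\cR(5,5)$. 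Step~3 asserts that no Step~2 graph admits any such extension, a contradiction. Hence $\cR(5,5,48)=\emptyset$, which is Theorem~\ref{t:main}.
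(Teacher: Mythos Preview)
Your overall architecture is right, and the closing paragraph is fine, but the heart of the lemma---forcing $d\le 11$---is not established. Your option~(i) is simply false: the catalogue from Step~1 shows that $\cR(4,5,24,K)$ is \emph{not} empty for $d\in\{12,13\}$ (there are tens of thousands of pointed graphs with $d=12$ and over a thousand with $d=13$), so one cannot hope that no adjacent degree-24 pair has codegree $\ge 12$. Your option~(ii) is only a sketch, and as stated it cannot work either, because there really are graphs $G\in\cR(4,5,24)$ with vertices of degree $12$ and $13$; an averaging argument over all of $N_F(v)$ will not rule them out.

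What the paper does instead is a two-step selection. First, a short double-count shows that either $F$ or $\bar F$ has a degree-24 vertex $b$ adjacent to at least $12$ other degree-24 vertices: if not, letting $W$ be the degree-24 vertices of $F$, each $w\in W$ has at most $11$ neighbours in $W$, hence at least $13$ in $\V F\setminus W$, giving $\ge 13\,\card W$ cross-edges in $F$; the same reasoning in $\bar F$ gives $\ge 13(48-\card W)$ cross-edges in $\bar F$; but the total is $\card W(48-\card W)\le 576<624$. Second, having fixed such a $b$ and set $G=F[N_F(b)]\in\cR(4,5,24)$, one invokes a \emph{catalogue fact} from Step~1: every graph in $\cR(4,5,24)$ has at most $8$ vertices of degree exceeding~$11$. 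Since at least $12$ vertices of $G$ have degree $24$ in $F$, at least $12-8=4$ of them have degree $\le 11$ in $G$; choose $a$ among these. Then $d=\deg_G(a)\le 11$ by construction, and your closing argument applies. The key idea you are missing is that $b$ is chosen first by the counting argument, and only then is $a$ chosen inside $N_F(b)$ using the degree statistics of the catalogue.
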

\begin{proof}
 Suppose $F\in\cR(5,5,48)$.  We first prove that either $F$ or its complement
 has a vertex of degree~24 adjacent to at least 12 other vertices of degree~24.
 Suppose that $F$ is a counterexample to this claim, and let $W\subseteq \V F$ be
 its vertices of degree~24.  Since $F[W]$ has maximum degree 11, there are
 at least $e_1=13\,\card W$ edges between $W$ and $\V F\setminus W$ in~$F$.
 Similarly, there are at least $e_2=13\,(48-\card{W})$ edges between
 $W$ and $\V F\setminus W$ in~$\bar F$.
 However, this is impossible since $e_1+e_2=13\times 48=624$
 and $\card W(48-\card W)\le 24^2=576$.
 
 So let $b$ be a vertex of $F$ of degree 24 that is adjacent to at least 12
 other vertices of degree~24 and define $G=F[N_F(b)]$.  From the
 $\cR(4,5,24)$ catalogue we find that $G$ has at most 8 vertices of
 degree more than~11, so we can choose $a\in N_F(b)$ that has
 degree 24 in $F$ and degree at most 11 in~$G$.  Define
 $H=F[N_F(a)]$.  Then the gluing of $(G,a)$ and $(H,b)$ in
 Step~2 will find a subgraph of~$F$ and the failure of one point
 extension in Step~3 will show that $F$ doesn't exist.
\end{proof}

\begin{table}[ht]
\centering
\begin{tabular}{c|ccccc|r}
$e$&$i_3$&$i_4$&$c_3$&$\delta$&$\varDelta$&count\\
\hline
116&356--368&225--262&123--128&8--9 &10--11&9 \\
117&346--362&216--253&122--132&8--9 &10--11&90 \\
118&340--360&206--251&120--136&6--9 &10--12&806 \\
119&332--356&198--247&124--140&6--9 &10--13&4358 \\
120&324--352&186--243&127--144&6--10&10--13&16346 \\
121&319--344&181--232&130--146&6--10&11--13&43457 \\
122&314--337&178--223&133--149&6--10&11--13&79678 \\
123&310--330&171--215&136--152&6--10&11--13&92504 \\
124&304--324&163--208&140--154&6--10&11--13&67209 \\
125&302--318&161--201&144--157&6--10&11--13&31996 \\
126&296--312&155--195&147--160&7--10&11--12&11485 \\
127&291--301&152--177&152--162&8--10&11--12&3401 \\
128&286--296&149--171&156--164&8--10&11--12&843 \\
129&281--290&146--165&162--166&9--10&11--12&147 \\
130&276--282&143--155&166--169&9--10&11--12&32 \\
131&270--270&143--149&172--172&10--10&11--11&3 \\
132&264--264&138--144&176--176&11--11&11--11&2 \\
\hline
all&264--368&138--262&120--176&6--11&10--13&352366
\end{tabular}
\medskip
\caption{Statistics for all $(4,5,24)$-graphs\label{R45table}}
\end{table}

\section{Step 1: Completing the list of graphs in $\cR(4,5,24)$}
McKay and Radziszowski~\cite{McRa95} produced a list of 350,904 such graphs, and proved that the list contains all graphs in $\cR(4,5,24)$ with
minimum degree is 6, 7 or 8, or maximum degree 12 or 13, or
if the graph is regular of degree~11.

To complete the catalogue it suffices to find those graphs with minimum degree~9
or~10.  We did this using the well-tested code from~\cite{McRa95} to glue together
graphs of type $\cR(3,5,9)$ and $\cR(4,4,14)$, and of types $\cR(3,5,10)$ and
$\cR(4,4,13)$.
Although this requires a very large number of graph pairs to be glued, it
is feasible when the graphs of type  $\cR(3,5,9)$ and $\cR(3,5,10)$ are
arranged in a tree structure that exhibits common subgraphs and
symmetries.  See~\cite{McRa95} for details. All graphs in $\cR(4,5,24)$
with a vertex of
degree~9 or~10 were found, to increase the overlap with~\cite{McRa95}
for checking purposes.  This took about 1.5 core-years of computer
time and discovered 1462 new graphs in $\cR(4,5,24)$; recall that the
search in~\cite{McRa95} was not intended to be complete.

Then we devoted another 6 core-months to sanity-checking
of the completed catalogue.  As an example, let $\cA'$ be the set of all
neighbourhoods of a vertex of degree 9 or 10 in the 1462 new graphs,
and let $\cB'$ be the set of all complementary
neighbourhoods of the same vertices in those graphs.
Then, using a completely separate program, we constructed all graphs
in $\cR(4,5,24)$ with a vertex having a neighbourhood in $\cA'$ and a
complementary neighbourhood in~$\cB'$.  Only known graphs appeared.
We also proved, with a separate computation, that if there are any
graphs in $\cR(4,5,24)$ but not in the catalogue, they do not share any
21-vertex subgraph with a graph in the catalogue.

Summary statistics of the catalogue, to complete~\cite[Table~4]{McRa95}, are provided in Table~\ref{R45table}; $e$ is the number of edges,
$i_k$ is the number of independent sets of size~$k$, $c_3$ is the
number of triangles, and $\delta,\varDelta$ are the minimum and
maximum degrees.  The graphs themselves are available at~\cite{RamseyWeb}.

\section{The structure of $\cR(4,5,24,K)$}
The neighbourhood of a vertex $a$ of degree $d$
in a pointed graph
$(G,a)\in\cR(4,5,24,K)$ is the graph $K\in\cR(3,5,d)$.
However not all graphs in $\cR(3,5)$ appear in pointed graphs.
In Table~\ref{PointedTable}, we show the number of graphs~$K$
which occur at least once and the total number of pointed
graphs for each~$d$.
Note that we have not used the automorphism group of~$G$,
so some of the pointed graphs are isomorphic.
The great majority of graphs in $\cR(4,5,24)$ have trivial
automorphism group, so we gave up the small available
speedup (estimated at 3\%) in order to have fewer steps in
the computation.
The total of 8,456,784 in the table is $24\times\card{\cR(4,5,24)}$.

\begin{table}[ht]
\centering
\begin{tabular}{c|cc|r}
$d$ & $\card{\cR(3,5,d)}$ & occurring & count~~ \\
\hline
1--5 & 21 & 0 & 0 \\
6 & 32 & 2 & 1979 \\
7 & 71 & 11 & 7497 \\
8 & 179 & 88 & 64395 \\
9 & 290 & 240 & 832288 \\
10 & 313 & 294 & 4651124 \\
11 & 105 & 103 & 2800499 \\
12 & 12 & 11 & 97968 \\
13 & 1 & 1 & 1034 \\
\hline
all & 1029 & 750 & 8456784 
\end{tabular}
\medskip
\caption{Counts of pointed graphs\label{PointedTable}}
\end{table}
 
The number of pointed graphs in $\cR(4,5,24,K)$ for
$K\in\cR(3,5,{\le}11)$ varies greatly: from~0 to~526,073, the latter
from a rather irregular graph of order~11 and 21 edges.
For Step~2 we take two pointed graphs
$(G,a),\allowbreak(H,b)\in\cR(4,5,24,K)$ and overlap them so that their
common subgraph~$K$ coincides.  This can be done in one
distinct way for each automorphism of~$K$ (again ignoring some
small reductions arising from automorphisms of~$G$ and~$H$).
Most graphs $K$ have only trivial automorphisms but some
have  large automorphism groups, the largest having order 1152
(a vertex-transitive quartic graph of order~8).

Taking the wildly varying sizes of $\cR(4,5,24,K)$ as well as the automorphism groups of the various $K$ into account we needed to solve approximately 2 trillion gluing problems. While that is certainly a lot, we were able to perform hundreds of thousands of such gluings per second per core. The whole calculation took approximately six core-months for one implementation
and two core-months for the other.

\section{Step 2. Finding all ways to glue}
In order to ensure correctness, the list of pointed graphs was
prepared independently by the two authors and all the gluings were
performed by two programs written independently using different methods.
The decision to use two different methods rather than identifying the
fastest method and implementing it twice was based on the long-established
axiom of software engineering that different programmers tend to make
the same errors when faced with the same task.

Now we will describe the two different methods for gluing $(G,a), (H,b) 
\allowbreak \in \cR(4,5,24,K)$
after they are overlapped at the common subgraph~$K$.
Because of the large number of calculations needed, the naive approach of
deciding one unknown adjacency at a time takes far too long.

Define $d'=23-d$. 
Suppose $K$ has vertices $v_0,\ldots,v_{d-1}$, $G$ has vertices $v_0,\ldots,v_{d-1}, a, a_1,\ldots,a_{d'}$ and $H$ has vertices $v_0,\ldots,v_{d-1}, b, b_1,\ldots,b_{d'}$.
Note that the vertices $a$ and $b$ cannot participate in any $5$-cliques or independent $5$-sets by the construction.
To specify a gluing it suffices to specify whether or not $a_i$ and $b_j$ are connected by an edge for $1 \leq i, j, \leq d'$.
We will record this data in a $d' \times d'$ matrix
$M$ with entries $0$ (for no edge) and $1$ (for edge).

Define a \textit{potential $(r,s,t)$-clique} to be
$r$ vertices $w_1,\ldots,w_r$ in $\V K$, $s$ vertices $x_1,\ldots,x_s$
in $\V G-\V K-\{a\}$, and $t$ vertices $y_1,\ldots,y_t$ in $\V H-\V K-\{b\}$ such that
\[
 \{w_1,\ldots,w_r,x_1,\ldots,x_s\}
\]
is an $(r+s)$-clique in $G$ and
\[
 \{w_1,\ldots,w_r,y_1,\ldots,y_t\}
\]
is an $(r+t)$-clique in $H$. Define a \textit{potential
independent $(r,s,t)$-set} similarly.
The following lemma is immediate.

\begin{lemma}\label{l:newcliques}
 A $d' \times d'$ 0-1 matrix $M = (m_{ij})$ defines a gluing if and only if
 \begin{enumerate}
 \item For each potential $(r,s,t)$-clique with $r+s+t = 5$, $m_{x_iy_j}=0$
 for some $1\le i\le s,1\le j\le t$.   
 (This is needed for  $(1,2,2)$, $(0,2,3)$ and $(0,3,2)$.)
 \item For each potential independent $(r,s,t)$-set with $r+s+t = 5$,
  $m_{x_iy_j}=1$ for some $1\le i\le s,1\le j\le t$.
 (This is needed for $(3,1,1)$, $(2,1,2)$, $(2,2,1)$, $(1,1,3)$, $(1,2,2)$,
 $(1,3,1)$, $(0,2,3)$ and $(0,3,2)$.) 
\end{enumerate}
\end{lemma}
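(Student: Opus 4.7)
The plan is to translate the condition that the gluing produces a graph in $\cR(5,5)$ into matrix conditions on $M$, then identify which $(r,s,t)$ actually need to be checked. Set $A := \V G - \V K - \{a\}$ and $B := \V H - \V K - \{b\}$. I would first observe that any forbidden $5$-clique or independent $5$-set $S$ in the glued graph must meet both $A$ and $B$: it cannot avoid $A$ (else $S \subseteq \V H$, contradicting $H \in \cR(4,5,24)$) nor avoid $B$ (else $S \subseteq \V G$), and it cannot contain $a$ or $b$ by the remark immediately preceding the lemma statement. So $S$ is described by a triple $(r,s,t)$ giving its intersections with $\V K$, $A$, $B$, with $r+s+t=5$ and $s,t \ge 1$.

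Then for each such triple I would directly read off what it means for $S$ to be a $5$-clique. All edges inside $\V K \cup A = \V G - \{a\}$ live in $G$, all edges inside $\V K \cup B = \V H - \{b\}$ live in $H$, and the only edges between $A$ and $B$ are those encoded by $M$. Hence $S$ is a $5$-clique iff its $K$-part together with its $A$-part is a clique in $G$, its $K$-part together with its $B$-part is a clique in $H$, and every corresponding entry $m_{x_iy_j}$ equals $1$. The first two conditions are exactly the definition of a potential $(r,s,t)$-clique, so absence of any new $5$-clique is equivalent to part~(1) of the lemma. The independent-set case is dual, giving part~(2).

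Finally I would prune the enumeration to the triples stated. For cliques, $K \in \cR(3,5)$ gives $r \le 2$ and $G, H \in \cR(4,5)$ give $r+s \le 3$ and $r+t \le 3$; together with $r+s+t=5$ and $s,t \ge 1$ these force $s, t \ge 2$, leaving precisely $(1,2,2), (0,2,3), (0,3,2)$. For independent sets the analogous constraints give $r \le 4$, $r+s \le 4$, $r+t \le 4$, producing ten candidate triples; two of them, $(0,1,4)$ and $(0,4,1)$, must be eliminated by an extra argument. For $(0,1,4)$, existence of a potential set would require an independent $4$-set $\{y_1,\ldots,y_4\} \subseteq B$ in $H$; but since $N_H(b) = \V K$, the vertex $b$ is non-adjacent in $H$ to every $y_j$, so $\{b, y_1, \ldots, y_4\}$ would be an independent $5$-set in $H$, contradicting $H \in \cR(4,5,24)$. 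The symmetric argument rules out $(0,4,1)$, leaving the eight triples listed.

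The proof is essentially a bookkeeping exercise, and the only nontrivial step is this last elimination, where one must invoke the special role of $b$ (respectively $a$) inside $H$ (respectively $G$) rather than just the Ramsey properties of $K$, $G$, and $H$ that suffice elsewhere.
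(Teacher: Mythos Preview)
Your proof is correct and follows the same approach as the paper's, which simply observes that a forbidden $5$-set must meet both $A$ and $B$ and then reads off the matrix condition. You supply considerably more detail than the paper does: in particular, the paper's proof never spells out the enumeration of admissible triples, whereas you derive the clique triples from the bounds $r+s\le 3$, $r+t\le 3$ and correctly note that the independent-set case yields two spurious triples $(0,1,4)$ and $(0,4,1)$ that must be eliminated via the observation that $b$ is non-adjacent in $H$ to all of $B$ (and symmetrically for $a$)---an argument the paper leaves entirely implicit.
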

\begin{proof}
  Please refer to Figure~\ref{r55pic} and consider a set $W$ of size~5.
  For $W$ to be a clique in the completed graph, it must overlap both
  $K\cup A$ and $K\cup B$, and the pairs of vertices in each those
  intersections must be edges.  That implies it is one of the
  potential $(r,s,t)$-cliques
  listed in part~(1), and to prevent $W$ from being a clique in the
  completed graph we need to include a non-edge.  The case of
  an independent set is similar.
\end{proof}

\medskip
The two gluing methods are logically similar but implemented
very differently.
The first gluing method expands on the method in \cite{McRa95}. Define an \emph{interval} to be a set of the form $I = \{X \, | \, B \subseteq X \subseteq T\}$, where $B$ and $T$ are subsets of $\{a_1,\ldots,a_{d'}\} \times \{b_1,\ldots,b_{d'}\}$. We write $I=[B,T]$. We represent $I$ by two $d' \times d'$ matrices with coefficients in $\{0,1\}$.

Given an interval $[B,T]$, we define collapsing rules as follows. There are $11$ in total, one for each of the triples in Lemma \ref{l:newcliques} above. The special event FAIL means that there is no $X\in[B,T]$ which corresponds to a 
proper gluing.

\smallskip
\noindent \textbf{Rule} $K_{1,2,2}$. Suppose $\{w_1, x_1, x_2, y_1, y_2\}$ is a
potential $(1,2,2)$-clique.
\begin{algorithmic}
\If {$(x_1,y_1), (x_1,y_2), (x_2,y_1), (x_2,y_2) \in B$} FAIL
\ElsIf{ $(x_1,y_1), (x_1,y_2), (x_2,y_1) \in B$} $T := T-(x_2,y_2)$
\ElsIf{ $(x_1,y_1), (x_1,y_2), (x_2,y_2) \in B$} $T := T-(x_2,y_1)$
\ElsIf{ $(x_1,y_1), (x_2,y_1), (x_2,y_2) \in B$} $T := T-(x_1,y_2)$
\ElsIf{ $(x_1,y_2), (x_2,y_1), (x_2,y_2) \in B$} $T := T-(x_1,y_1)$
\EndIf
\end{algorithmic}

\smallskip\noindent
The collapsing rules for $K_{0,2,3}$ and $K_{0,3,2}$ are similar.
In each case, the rule says that if 5 vertices include 9 edges, then
the remaining vertex pair must not be an edge.

\smallskip\noindent
\textbf{Rule} $E_{3,1,1}$. Suppose $\{w_1, w_2, w_3, x_1, y_1\}$ is a
potential independent $(3,1,1)$-set.
\begin{algorithmic}
\If{ $(x_1,y_1) \not \in T$} FAIL
\Else{ $B := B \cup (x_1,y_1)$.}
\EndIf
\end{algorithmic}

\smallskip\noindent
The collapsing rules for the other potential
independent sets from Lemma~\ref{l:newcliques} are once again similar.

We start the search with a single interval $I=[B,T]$ with $B = \varnothing$ and $T = \{a_1,\ldots,a_{d'} \} \times \{b_1,\ldots,b_{d'} \}$, and we note that the collapsing rule $E_{3,1,1}$ can be applied even in this case. Each time we add an edge to $B$ or remove an edge from $T$ the number of possible gluings is cut in half.

After applying these collapsing rules repeatedly, we must eventually encounter either FAIL or a stable situation. The discussion in \cite{McRa95} applies, and the final state is independent of the order of the application of the collapsing rules.

If we do not encounter FAIL, we pick some $(a_i,b_j)$ with $(a_i,b_j) \not \in B$
and $(a_i,b_j) \in T$, and consider the cases $I=[B, T-(a_i,b_j)]$ and $I=[B \cup (a_i,b_j), T]$ separately.

\medskip
The second method applies an equivalent procedure using 
data structures familiar from the constraint satisfaction area.
Each entry $m_{ij}$ of $M$ is a \textit{variable}, with value FALSE,
TRUE or UNKNOWN, while each
set $\{x_1,\ldots,x_s\}\times\{y_1,\ldots,y_t\}$ is a \textit{clause}.
Clauses from potential $(r,s,t)$-cliques can't have all their variables TRUE,
while clauses from potential independent $(r,s,t)$-sets can't have all their
variables FALSE.
Each variable $\alpha$ has a list $\cC(\alpha)$ of the clique
clauses which contain $\alpha$, and a list $\cI(\alpha)$ of the
independent set clauses which contain $\alpha$.
There is also a stack $S$ which maintains a set of
distinct variables on a last-in first-out basis.
Informally, at each moment $S$ contains those variables which
have been assigned FALSE or TRUE, but their clause lists have
not yet been scanned.

Initially, variables are set to TRUE if required by independent
$(3,1,1)$-set clauses, and UNKNOWN otherwise.  The variables
equal to TRUE are put onto $S$.  Then we execute the following
until it terminates.

\vskip 0pt plus 50pt\penalty-300\vskip 0pt plus -50pt

\begin{algorithmic}
  \While{$S\ne\emptyset$}
     \State Pop the top variable $\alpha$ off $S$
     \If {$\alpha=\mathrm{FALSE}$}
         \For {each clause $C\in\cI(\alpha)$}
            \If {all variables in $C$ are FALSE}
                \State \textbf{exit} FAIL
            \ElsIf {all variables in $C$ are FALSE\\
                 \hspace{10em} except for $\beta=\mathrm{UNKNOWN}$}
                 \State Set $\beta := \mathrm{TRUE}$ and
                    and push $\beta$ onto $S$
            \EndIf
         \EndFor
     \Else
         \For {each clause $C\in\cC(\alpha)$}
            \If {all variables in $C$ are TRUE}
                \State \textbf{exit} FAIL
            \ElsIf {all variables in $C$ are TRUE\\
                 \hspace{10em} except for $\beta=\mathrm{UNKNOWN}$}
                 \State Set $\beta := \mathrm{FALSE}$ and
                    and push $\beta$ onto $S$
            \EndIf
         \EndFor
     \EndIf
  \EndWhile
\end{algorithmic}

\smallskip
For good efficiency it is essential that variables be assigned values as they
enter the stack and not when they leave it.  Also, a good optimization is for
clauses to remember how many UNKNOWN variables they have.
If the algorithm terminates with ``\textbf{exit} FAIL'', there is no solution.
Otherwise, all the variables with value FALSE or TRUE have those
values in all solutions.  If there is any variable with value UNKNOWN,
we can choose one such variable and try FALSE and TRUE separately
with $S$ initialised to that variable only.
And so on, recursively.

\medskip
Both methods were very fast for $d\ge 8$, often performing
100,000 gluings per second per core, primarily because failure
occurred early most of the time.

For $d\le 7$, the methods as described could take much longer
since extremely large search trees with many useless branches could
be generated.  For those values of $d$ we used additional techniques.

For the first method, two techniques were used.
First, for each pair $(a_i,b_j) \in T-B$ we applied the collapsing rules to both $[B, T-(a_i,b_j)]$ and $[B \cup (a_i,b_j), T]$. If for some pair $(a_i,b_j)$ we arrived at FAIL in both cases we then concluded that there were no gluings. If $[B,T-(a_i,b_j)]$  led to FAIL then we replaced $[B,T]$ by $[B \cup (a_i,b_j), T]$, and if $[B \cup (a_i,b_j), T]$ led to FAIL then we replaced $[B,T]$ by $[B,T-(a_i,b_j)]$. This is of course more expensive than the original algorithm at each node of the search tree, but we found that for $6 \leq d \leq 7$ it was worth it.

Second, we ordered the pairs $(a_i,b_j)$ according to how many independent sets of type $(2,2,1)$ and $(2,1,2)$ they were contained in and started the binary search with a pair $(a_i,b_j)$ which was maximal in this sense. The advantage is that when considering $[B,T-(a_i,b_j)]$ the collapsing rules $E_{2,2,1}$ and $E_{2,1,2}$, which require only a single edge to be missing from $T$ in order to modify $B$, come into play as much as possible.

For the second method, instead of choosing an arbitrary UNKNOWN variable to branch on, we used an UNKNOWN variable which occurred in the
greatest number of clique clauses with all TRUE variables except two UNKNOWN variables, or independent set clauses with all FALSE variables except two UNKNOWN variables.  This is a
heuristic for how beneficial it is to assign FALSE or TRUE to the variable.

In both cases, these enhancements made the cost per node of the search tree much greater but, due to the smaller number of pointed graphs for small~$d$, the computation finished quickly enough.

\begin{figure}[p]
\def\Maa{ \begin{array}{c|}
\hbox to 0pt{\hss\normalsize $a$\kern0.7em}0\end{array} }
\def\Mad{ \begin{array}{cccccccccccc|} 1&1&1&1&1&1&1&1&1&1&1&1 \end{array} }
\def\Mac{ \begin{array}{ccccccccccc|} 1&1&1&1&1&1&1&1&1&1&1 \end{array} }
\def\Mab{ \begin{array}{cccccccccccc} 0&0&0&0&0&0&0&0&0&0&0&0 \end{array} }
\def\Mae{ \begin{array}{c} 1\end{array} }

\def\Mda{ \begin{array}{c|}\strut
 1\\1\\1\\1\\1\\1\\1\\1\\1\\1\\1\\1  \end{array} }
\def\Mdd{ \begin{array}{cccccccccccc|}\strut
0&1&0&1&0&0&1&1&0&0&1&0 \\ 1&0&0&1&0&0&0&1&0&0&0&1 \\ 0&0&0&0&1&1&0&1&1&0&1&0 \\
1&1&0&0&0&0&0&0&0&1&1&0 \\ 0&0&1&0&0&0&0&1&0&1&1&1 \\ 0&0&1&0&0&0&1&0&1&0&1&0 \\
1&0&0&0&0&1&0&0&1&1&0&1 \\ 1&1&1&0&1&0&0&0&1&0&0&1 \\ 0&0&1&0&0&1&1&1&0&1&0&0 \\
0&0&0&1&1&0&1&0&1&0&0&1 \\ 1&0&1&1&1&1&0&0&0&0&0&0 \\ 0&1&0&0&1&0&1&1&0&1&0&0
\end{array} }
\def\Mdc{ \begin{array}{ccccccccccc}\strut
0&1&1&0&1&1&0&0&0&0&0 \\ 1&0&0&1&1&1&0&0&0&1&0 \\ 0&0&0&1&1&0&1&0&1&0&0 \\
1&0&1&0&0&0&1&1&1&0&1 \\ 0&0&1&1&0&1&0&1&0&0&1 \\ 1&0&1&0&1&0&1&0&0&1&1 \\
0&1&0&1&1&0&1&1&0&0&0 \\ 0&1&1&0&0&0&1&1&0&1&0 \\ 0&1&1&1&0&1&0&0&1&1&0 \\
1&0&1&0&1&0&0&1&1&1&0 \\ 0&1&0&0&0&1&0&1&1&1&1 \\ 1&1&0&0&0&1&1&0&1&0&1
\end{array} }
\def\Mdb{ \begin{array}{cccccccccccc|}\strut
1&1&1&1&1&1&1&0&1&0&0&0 \\ 1&0&1&1&1&0&0&1&0&1&1&1 \\ 1&1&0&1&1&1&0&0&1&1&0&1 \\
0&1&0&1&0&1&1&1&1&0&1&0 \\ 1&1&1&0&0&0&1&1&0&1&0&0 \\ 1&0&1&0&1&1&0&1&1&0&1&1 \\
1&0&0&1&1&0&1&1&1&1&0&0 \\ 1&0&1&0&0&1&1&1&1&0&0&1 \\ 1&1&1&0&0&1&1&0&0&1&0&1 \\
0&1&1&1&1&0&1&0&0&0&1&1 \\ 0&1&0&0&0&1&0&1&0&1&0&1 \\ 0&1&1&0&1&1&0&0&1&1&1&1
\end{array} }
\def\Mde{ \begin{array}{c}\strut 0\\0\\0\\0\\0\\0\\0\\0\\0\\0\\0\\0
\end{array} }

\def\Mca{ \begin{array}{c|}\strut
 1\\1\\1\\1\\1\\1\\1\\1\\1\\1\\1 \end{array} }
\def\Mcd{ \begin{array}{cccccccccccc|}\strut
0&1&0&1&0&1&0&0&0&1&0&1 \\ 1&0&0&0&0&0&1&1&1&0&1&1 \\ 1&0&0&1&1&1&0&1&1&1&0&0 \\
0&1&1&0&1&0&1&0&1&0&0&0 \\ 1&1&1&0&0&1&1&0&0&1&0&0 \\ 1&1&0&0&1&0&0&0&1&0&1&1 \\
0&0&1&1&0&1&1&1&0&0&0&1 \\ 0&0&0&1&1&0&1&1&0&1&1&0 \\ 0&0&1&1&0&0&0&0&1&1&1&1 \\
0&1&0&0&0&1&0&1&1&1&1&0 \\ 0&0&0&1&1&1&0&0&0&0&1&1
\end{array} }
\def\Mcc{ \begin{array}{ccccccccccc|}\strut
0&0&0&0&0&0&1&0&0&1&1 \\ 0&0&0&0&0&0&0&1&1&0&1 \\ 0&0&0&0&1&1&1&0&0&0&1 \\
0&0&0&0&1&1&0&1&0&0&1 \\ 0&0&1&1&0&0&0&0&1&1&0 \\ 0&0&1&1&0&0&0&0&1&1&0 \\
1&0&1&0&0&0&0&1&1&0&0 \\ 0&1&0&1&0&0&1&0&0&1&0 \\ 0&1&0&0&1&1&1&0&0&0&0 \\
1&0&0&0&1&1&0&1&0&0&0 \\ 1&1&1&1&0&0&0&0&0&0&0
\end{array} }
\def\Mcb{ \begin{array}{cccccccccccc|}\strut
1&0&0&0&1&0&1&0&1&0&1&1 \\ 0&1&0&0&0&1&0&1&0&1&0&1 \\ 0&1&1&1&0&0&1&1&0&0&0&0 \\
1&0&0&1&1&1&0&1&1&1&0&0 \\ 1&1&0&1&1&0&0&0&0&0&1&1 \\ 1&1&1&0&0&1&1&0&0&1&0&0 \\
0&0&0&1&0&0&1&1&1&1&1&0 \\ 0&0&1&0&0&1&1&1&1&0&0&1 \\ 0&1&0&0&1&1&0&0&1&1&1&0 \\
0&0&1&0&1&0&0&1&0&1&1&1 \\ 0&0&0&1&0&1&0&1&0&0&1&1
\end{array} }
\def\Mce{ \begin{array}{c}\strut 1\\1\\1\\1\\1\\1\\1\\1\\1\\1\\1 \end{array} }

\def\Mba{ \begin{array}{c|} \strut0\\0\\0\\0\\0\\0\\0\\0\\0\\0\\0\\0
\end{array} }
\def\Mbd{ \begin{array}{cccccccccccc|}\strut
1&1&1&0&1&1&1&1&1&0&0&0 \\ 1&0&1&1&1&0&0&0&1&1&1&1 \\ 1&1&0&0&1&1&0&1&1&1&0&1 \\
1&1&1&1&0&0&1&0&0&1&0&0 \\ 1&1&1&0&0&1&1&0&0&1&0&1 \\ 1&0&1&1&0&1&0&1&1&0&1&1 \\
1&0&0&1&1&0&1&1&1&1&0&0 \\ 0&1&0&1&1&1&1&1&0&0&1&0 \\ 1&0&1&1&0&1&1&1&0&0&0&1 \\
0&1&1&0&1&0&1&0&1&0&1&1 \\ 0&1&0&1&0&1&0&0&0&1&0&1 \\ 0&1&1&0&0&1&0&1&1&1&1&1 
\end{array} }
\def\Mbc{ \begin{array}{ccccccccccc|}\strut
1&0&0&1&1&1&0&0&0&0&0 \\ 0&1&1&0&1&1&0&0&1&0&0 \\ 0&0&1&0&0&1&0&1&0&1&0 \\
0&0&1&1&1&0&1&0&0&0&1 \\ 1&0&0&1&1&0&0&0&1&1&0 \\ 0&1&0&1&0&1&0&1&1&0&1 \\
1&0&1&0&0&1&1&1&0&0&0 \\ 0&1&1&1&0&0&1&1&0&1&1 \\ 1&0&0&1&0&0&1&1&1&0&0 \\
0&1&0&1&0&1&1&0&1&1&0 \\ 1&0&0&0&1&0&1&0&1&1&1 \\ 1&1&0&0&1&0&0&1&0&1&1
\end{array} }
\def\Mbb{ \begin{array}{cccccccccccc}\strut
0&1&0&0&0&0&1&1&1&0&1&0 \\ 1&0&0&0&0&0&0&1&1&0&0&1 \\ 0&0&0&1&1&1&0&0&1&0&1&0 \\
0&0&1&0&0&0&0&0&1&1&1&1 \\ 0&0&1&0&0&1&1&0&1&1&0&0 \\ 0&0&1&0&1&0&1&0&0&0&1&0 \\
1&0&0&0&1&1&0&0&0&1&0&1 \\ 1&1&0&0&0&0&0&0&0&1&1&0 \\ 1&1&1&1&1&0&0&0&0&0&0&1 \\
0&0&0&1&1&0&1&1&0&0&0&1 \\ 1&0&1&1&0&1&0&1&0&0&0&0 \\ 0&1&0&1&0&0&1&0&1&1&0&0
\end{array} }
\def\Mbe{ \begin{array}{c}\strut
 1\\1\\1\\1\\1\\1\\1\\1\\1\\1\\1\\1 \end{array} }

\def\Mea{ \begin{array}{c|}
\hbox to 0pt{\hss\normalsize $b$\kern0.7em}\strut 1\end{array} }
\def\Med{ \begin{array}{cccccccccccc|}\strut 0&0&0&0&0&0&0&0&0&0&0&0 \end{array} }
\def\Mec{ \begin{array}{ccccccccccc}\strut 1&1&1&1&1&1&1&1&1&1&1 \end{array} }
\def\Meb{ \begin{array}{cccccccccccc|}\strut 1&1&1&1&1&1&1&1&1&1&1&1 \end{array} }
\def\Mee{ \begin{array}{c} \strut 0\end{array} }

\newpage
\def\strut{\vrule height2.2ex width0pt}

{\small
\[ \arraycolsep=1.9pt \def\arraystretch{0.81}
 \begin{array}{c@{}c@{}c@{}c@{}c}
 \multicolumn{3}{c}{\hbox{\normalsize$G$}} &&\\[-0.4ex]
 \multicolumn{3}{c}{$\downbracefill$}&& \\[0.4ex]
  \Maa & \Mab  & \Mac & \Mad & \Mae \\
 \hline
 \Mba & \Mbb  & \Mbc & \Mbd & \Mbe \\
 \cline{3-5}
 \Mca& \Mcb  & \Mcc & \Mcd & \Mce \\
 \cline{1-3}
 \Mda & \Mdb  & \Mdc & \Mdd & \Mde \\
 \hline
 \Mea & \Meb  & \Mec & \Med & \Mee \\
 &&\multicolumn{3}{c}{$\upbracefill$} \\[0.6ex]
  &&\multicolumn{3}{c}{\hbox{\normalsize$H$}} 
\end{array} \]
}
\caption{An example of $G,H\in\cR(4,5,24)$ glued along
 $K\in\cR(3,5,11)$ (the square in the centre), to make $F\in\cR(5,5,37)$.
 \label{GlueFig}}
\end{figure}

\section{Step 3. Empirical results}
For $6\le d\le 9$, no gluings produced any output graphs, so Step~3 was
unnecessary.
For $d=10$ we found a total of 647,424 graphs (81,936 nonisomorphic)
in $\cR(5,5,38)$, all of them from a single $K \in \cR(3,5,10)$.
For $d=11$ we found a total of 15,244 graphs 
in $\cR(5,5,37)$, with $15,152$ graphs (14,412 nonisomorphic) coming from one
$K \in \cR(3,5,11)$ and 92 graphs (84 nonisomorphic) coming from another~$K$.
An example is shown in Figure~\ref{GlueFig}.
None of these graphs could be extended by one more vertex while
staying within $\cR(5,5)$, so Step~3 was completed successfully.

By Step~2, we do not need gluings for $d\ge 12$, which is fortunate
since the number of successful gluings is around 57 billion for $d=12$ and
perhaps even larger for $d=13$.  This would make Step~3 very onerous.
Of course, these considerations are the reason we sought to
eliminate $d\ge 12$ theoretically (Lemma~\ref{l:Step123ok}).

We wish to acknowledge useful comments from Staszek Radziszowski.

\clearpage

\end{document}